\newdimen\plusheight
\def\+{\;\lower\plusheight\hbox{$+$}\;}
\newdimen\minusheight
\def\-{\;\lower\minusheight\hbox{$-$}\;}
\newdimen\cdotsheight
\def\cds{\lower\cdotsheight\hbox{$\cdots$}}
\renewcommand{\Re}{\operatorname{Re}}
\def\leqalignno#1{\displ@y \tabskip\z@ plus\@ne fil
  \halign to\displaywidth{\hfil$\@lign\displaystyle{##}$\tabskip\z@skip
    &$\@lign\displaystyle{{}##}$\hfil\tabskip\z@ plus\@ne fil
    &\kern-\displaywidth\rlap{$\@lign\hbox{\rm##}$}\tabskip\displaywidth\crcr
    #1\crcr}}
\newcommand{\eb}{\begin{equation}}
\newcommand{\ee}{\end{equation}}
\newcommand{\df}{\dfrac}
\newcommand{\tf}{\tfrac}
\renewcommand{\Re}{\operatorname{Re}}
\renewcommand{\k}{\kappa}
\renewcommand{\Re}{\text{Re}}
\renewcommand{\(}{\left\(}
\renewcommand{\)}{\right\)}
\renewcommand{\[}{\left\[}
\renewcommand{\]}{\right\]}
\renewcommand{\pmod}[1]{\,(\textup{mod}\,#1)}
\numberwithin{equation}{section}
 \theoremstyle{plain}
\newtheorem{theorem}{Theorem}[section]
\numberwithin{equation}{section}%this is important
\theoremstyle{plain}
\newtheorem{conjecture}[theorem]{Conjecture}
\begin{document}

\title[Finite Trigonometric Sums Arising from Ramanujan's Theta Functions]
{Finite Trigonometric Sums Arising from Ramanujan's Theta Functions}
\author{Bruce C.~Berndt, Sun Kim, Alexandru Zaharescu}
\address{Department of Mathematics, University of Illinois, 1409 West Green
Street, Urbana, IL 61801, USA} \email{berndt@illinois.edu}
\address{Department of Mathematics, and Institute of Pure and Applied Mathematics, Jeonbuk National
University, 567 Baekje-daero, Jeonju-si, Jeol labuk-do 54896, Republic of Korea}
\email{sunkim@jbnu.ac.kr}
\address{Department of Mathematics, University of Illinois, 1409 West Green
Street, Urbana, IL 61801, USA; Institute of Mathematics of the Romanian
Academy, P.O.~Box 1-764, Bucharest RO-70700, Romania}
\email{zaharesc@illinois.edu}

\begin{abstract} Two classes of finite trigonometric sums, each involving only $\sin$'s, are evaluated in closed form.  The previous and  original proofs arise from Ramanujan's theta functions and modular equations.
\end{abstract} 

\subjclass[2020]{Primary 11L03, 33B10}
\keywords{Finite trigonometric sums, Theta functions, Ramanujan's modular equations}
\maketitle

\section{Introduction}

Our paper is motivated by two papers, one by K.~N.~Harshitha, K.~R.~Vasuki, and M.~V.~Yathirajsharma \cite{harshitha} and the other by G.~Vinay, H.~T.~Shwetha, and K.~N.~Harshitha \cite{palestine}, in which they evaluate certain finite trigonometric sums by appealing to Ramanujan's theory of theta functions.  In the former paper, Lambert series, elliptic integrals, and a famous $q$-series theorem of Bailey also play important roles.  The principal idea rests upon the asymptotic behavior of these functions as $q\to 1^-$.  In the latter paper, the authors use modular equations of Ramanujan and the limiting behavior of quotients of theta functions as $q\to 1^-$ to prove their identities.

In \cite{harshitha}, there are four primary theorems giving the values of finite sums of quotients of $\sin$'s \cite[Thm.~1.1, (1.7)--(1.10)]{harshitha}.  We shall provide short proofs of these identities with the use of contour integration.  In doing so, we shall also simplify their theorems.  At the end of their paper, they offer two conjectures.  We shall prove their conjectures by establishing a general theorem, which yields their conjectures as special cases.

In \cite{palestine}, the authors employ modular equations to evaluate seven trigonometric sums of a different sort.  We show that these can be derived in an elementary manner. However, it should be emphasized that although elementary proofs of the seven identities can now be provided, their existence depends upon the authors of \cite{palestine} discovering them via Ramanujan's modular equations. Surprisingly, one of the identities was more intractable than the others, and Gauss's cyclotomy was invoked to provide our proof.

%These two papers, \cite{harshitha} and \cite{palestine}, provide further evidence that Ramanujan's contributions lead to further beautiful results.

%In a recently submitted paper \cite{trigsummary}, the present authors evaluate further classes of finite trigonometric sums by employing contour integration and other methods.  In particular, in some of the sums, trigonometric functions with two independent periods appear.
% Perhaps all evaluations of finite trigonometric sums in the literature involve  trigonometric functions with only one period.  In \cite{trigsummary} certain finite trigonometric sums containing trigonometric functions with two linearly independent periods are evaluated.  Reciprocity theorems for trigonometric sums are also established in \cite{trigsummary}.

\section{Four Identities from \cite{harshitha}}

\begin{theorem}\label{thmh1} Let $n$ be  an even natural number, and let $ j \equiv 2\pmod4$, where $(\tf12 j, \tf12 n)=1$.  Then
\begin{equation}\label{hh1}
\sum_{k=1}^{\tf12 n-1} \df{\sin\left(\dfrac{ (j-1)k\pi}{n}\right)\sin\left(\dfrac{ (j+1)k\pi}{n}\right)}
{\sin^2\left(\dfrac{ k\pi}{n}\right)\sin^2\left(\dfrac{jk\pi}{n}\right)}=\df{n^2-4}{12}.
\end{equation}
\end{theorem}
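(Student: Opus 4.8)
The plan is to strip the summand down to a difference of two squared cosecants and then evaluate the two resulting sums by the classical cotangent--cosecant identity. First I would apply the product-to-sum formula to the numerator: writing $A=(j-1)k\pi/n$ and $B=(j+1)k\pi/n$, so that $A-B=-2k\pi/n$ and $A+B=2jk\pi/n$, we get $\sin A\sin B=\tfrac12\bigl(\cos(2k\pi/n)-\cos(2jk\pi/n)\bigr)$. Substituting $\cos 2\theta=1-2\sin^2\theta$ collapses this to $\sin^2(jk\pi/n)-\sin^2(k\pi/n)$, and dividing by the denominator $\sin^2(k\pi/n)\sin^2(jk\pi/n)$ rewrites each term of the sum as
\begin{equation*}
\frac{1}{\sin^2(k\pi/n)}-\frac{1}{\sin^2(jk\pi/n)}.
\end{equation*}
Thus the identity reduces to evaluating $S_1:=\sum_{k=1}^{n/2-1}\csc^2(k\pi/n)$ and $S_2:=\sum_{k=1}^{n/2-1}\csc^2(jk\pi/n)$ separately and subtracting.

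For $S_1$ I would use the classical evaluation $\sum_{k=1}^{m-1}\csc^2(k\pi/m)=(m^2-1)/3$, which follows by differentiating the cotangent sum $\sum_{k=0}^{m-1}\cot(\theta+k\pi/m)=m\cot(m\theta)$ in $\theta$ and letting $\theta\to0$ (equivalently, by the residue computation that contour integration supplies throughout this paper). Since $\csc^2(k\pi/n)$ is invariant under $k\mapsto n-k$ and the central term at $k=n/2$ equals $\csc^2(\pi/2)=1$, the full sum over $1\le k\le n-1$ splits as $2S_1+1=(n^2-1)/3$, whence $S_1=(n^2-4)/6$.

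The evaluation of $S_2$ is where the hypotheses enter, and I expect it to be the main obstacle. Since $j\equiv2\pmod4$, in particular $j$ is even, I may write $j=2j'$ and set $N=n/2$, so that $jk\pi/n=j'k\pi/N$. By the period-$\pi$ invariance of $\csc^2$ each term equals $\csc^2\bigl((j'k\bmod N)\pi/N\bigr)$, and the coprimality hypothesis $(\tfrac12 j,\tfrac12 n)=(j',N)=1$ ensures that $k\mapsto j'k\bmod N$ permutes the nonzero residues $1,\dots,N-1$. Hence $S_2=\sum_{r=1}^{N-1}\csc^2(r\pi/N)=(N^2-1)/3=(n^2-4)/12$. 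The delicate point is precisely this passage to the smaller modulus $N=n/2$: one must exploit that $j$ is \emph{even} to replace the modulus $n$ by $n/2$ before the coprimality condition becomes usable, since modulo $n$ the products $jk$ land only among even residues and no permutation argument is available.

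Subtracting the two evaluations gives $S_1-S_2=\tfrac{n^2-4}{6}-\tfrac{n^2-4}{12}=\tfrac{n^2-4}{12}$, which is the asserted value.
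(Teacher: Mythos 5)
Your proof is correct, and it takes a genuinely different and more elementary route than the paper. The paper fixes $j=2$ (arguing the sum is independent of $j$ under the stated hypotheses) and then integrates $\sin(3\pi z/n)/\bigl(\sin(\pi z/n)\sin^2(2\pi z/n)(e^{2\pi iz}-1)\bigr)$ over a large rectangle, extracting the answer from the residues at the triple pole $z=0$ and the double pole $z=n/2$. You instead collapse the summand algebraically to $\csc^2(k\pi/n)-\csc^2(jk\pi/n)$, evaluate the first sum from the classical identity $\sum_{k=1}^{m-1}\csc^2(k\pi/m)=(m^2-1)/3$ together with the $k\mapsto n-k$ symmetry, and handle the second by passing to the modulus $N=n/2$ and using that $(j/2,N)=1$ makes $k\mapsto (j/2)k \bmod N$ a permutation of $1,\dots,N-1$. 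All steps check out, including the crucial observation that the evenness of $j$ is what allows the reduction of modulus before the coprimality condition can be applied; note that your argument never uses $j\equiv 2\pmod 4$ as opposed to merely $j$ even, so you in fact prove a marginally more general statement. What your approach buys is transparency: it avoids residue computations entirely, treats all admissible $j$ uniformly without the preliminary reduction to $j=2$, and makes visible why the answer is independent of $j$. What the paper's contour-integration template buys is uniformity across all four theorems of that section, including cases where the summand does not telescope so cleanly into a difference of squared cosecants.
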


\begin{theorem}\label{thmh2} Let $n$ denote an even positive integer, and let $ j \equiv 2\pmod4$, where $(\tf12 j, \tf12 n)=1$.  Then
\begin{equation}\label{hh9}
\sum_{k=0}^{\tf12 n-1} \df{\sin\left(\dfrac{ (j-1)(2k+1)\pi}{2n}\right)\sin\left(\dfrac{ (j+1)(2k+1)\pi}{2n}\right)}
{\sin^2\left(\dfrac{ (2k+1)\pi}{2n}\right)\sin^2\left(\dfrac{j(2k+1)\pi}{2n}\right)}=\df{n^2}{4}.
\end{equation}
\end{theorem}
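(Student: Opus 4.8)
The plan is to collapse the summand to a difference of two cosecant-squared values and then evaluate the two resulting sums from a single classical identity, mirroring the proof of Theorem~\ref{thmh1}. First I would simplify the numerator. Writing the product of sines as a difference of cosines gives
\begin{equation*}
\sin\!\left(\frac{(j-1)(2k+1)\pi}{2n}\right)\sin\!\left(\frac{(j+1)(2k+1)\pi}{2n}\right)
=\tfrac12\left[\cos\!\left(\frac{(2k+1)\pi}{n}\right)-\cos\!\left(\frac{j(2k+1)\pi}{n}\right)\right],
\end{equation*}
and substituting $\cos\theta=1-2\sin^2(\theta/2)$ into each cosine turns the numerator into $\sin^2\!\left(\tfrac{j(2k+1)\pi}{2n}\right)-\sin^2\!\left(\tfrac{(2k+1)\pi}{2n}\right)$. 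Dividing by the denominator, the $k$-th term of \eqref{hh9} becomes simply
\begin{equation*}
\csc^2\!\left(\frac{(2k+1)\pi}{2n}\right)-\csc^2\!\left(\frac{j(2k+1)\pi}{2n}\right),
\end{equation*}
so the left-hand side splits as $S_1-S_2$, where $S_1$ and $S_2$ denote the sums of the first and second cosecant-squared terms over $0\le k\le\tfrac12 n-1$.

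Both pieces are controlled by the classical evaluation
\begin{equation*}
\sum_{k=1}^{N-1}\csc^2\!\left(\frac{k\pi}{N}\right)=\frac{N^2-1}{3},
\end{equation*}
which I would obtain from the residue calculus (integrating $\pi\cot(\pi z)$, equivalently $\pi^2\csc^2(\pi z)$, against the rational kernel attached to $z^{N}-1$). For $S_1$ the angles $\frac{(2k+1)\pi}{2n}$ are the odd multiples of $\frac{\pi}{2n}$ in $(0,\tfrac12\pi)$. Splitting the full grid sum $\sum_{k=1}^{2n-1}\csc^2(k\pi/2n)=\frac{4n^2-1}{3}$ into its even-index part $\sum_{\ell=1}^{n-1}\csc^2(\ell\pi/n)=\frac{n^2-1}{3}$ and its odd-index part leaves $n^2$ for the latter; since $n$ is even no angle equals $\tfrac12\pi$, so the symmetry $\csc^2(\pi-\theta)=\csc^2\theta$ shows this odd-index sum is $2S_1$, whence $S_1=\tfrac12 n^2$.

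The decisive step, and the only place the hypotheses enter, is $S_2$. Writing $j=2m$ with $m=\tfrac12 j$ odd, the second term equals $\csc^2\!\left(\frac{m(2k+1)\pi}{n}\right)$, which depends only on $m(2k+1)\bmod n$. Because $m$ is odd we have $\gcd(m,2)=1$, and together with $(\tfrac12 j,\tfrac12 n)=1$ this gives $\gcd(m,n)=1$; hence multiplication by $m$ is a parity-preserving bijection of $\mathbb{Z}/n\mathbb{Z}$ that permutes the odd residues $\{1,3,\dots,n-1\}$. As $2k+1$ already runs over exactly these residues, the permutation leaves $S_2$ unchanged and $S_2=\sum_{k=0}^{n/2-1}\csc^2\!\left(\frac{(2k+1)\pi}{n}\right)$, the odd-index part of $\sum_{k=1}^{n-1}\csc^2(k\pi/n)=\frac{n^2-1}{3}$. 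Removing its even-index part $\sum_{\ell=1}^{n/2-1}\csc^2(\ell\pi/(n/2))=\frac{n^2/4-1}{3}$ gives $S_2=\tfrac14 n^2$, and therefore $S_1-S_2=\tfrac12 n^2-\tfrac14 n^2=\tfrac14 n^2$, as desired. I expect the main obstacle to be precisely this permutation argument: one must check with care that the congruence $j\equiv2\pmod4$ and the coprimality $(\tfrac12 j,\tfrac12 n)=1$ combine to force $\gcd(m,n)=1$ and that multiplication by $m$ fixes the set of odd residues modulo $n$, so that the parameter $j$ may be stripped from the summand without altering the value of the sum.
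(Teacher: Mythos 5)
Your proposal is correct, and it takes a genuinely different route from the paper. The paper proves \eqref{hh9} by contour integration: it first reduces to $j=2$ (asserting that the hypotheses make the sum independent of $j$), then integrates $f(z)=\sin(3\pi z/n)\big/\bigl(\sin(\pi z/n)\sin^2(2\pi z/n)(e^{2\pi iz}+1)\bigr)$ over the rectangle $C_N$, reading off the sum from the residues at the half-integers and the constant $\tfrac{3n^2}{4}-\tfrac{n^2}{4}$ from the residues at $z=0$ and $z=n/2$. You instead collapse the summand algebraically: the product-to-sum identity turns the numerator into $\sin^2\bigl(\tfrac{j(2k+1)\pi}{2n}\bigr)-\sin^2\bigl(\tfrac{(2k+1)\pi}{2n}\bigr)$, so each term is exactly $\csc^2\bigl(\tfrac{(2k+1)\pi}{2n}\bigr)-\csc^2\bigl(\tfrac{j(2k+1)\pi}{2n}\bigr)$, and everything reduces to the classical evaluation $\sum_{k=1}^{N-1}\csc^2(k\pi/N)=(N^2-1)/3$ together with even/odd splittings of the grid. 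Your computations check out: $S_1=n^2-S_1'$ pairing via $\csc^2(\pi-\theta)=\csc^2\theta$ gives $S_1=n^2/2$ (no fixed point since $n$ is even), and the odd-index part of the $\csc^2(k\pi/n)$ sum is $n^2/4$, so $S_1-S_2=n^2/4$. Two points in your favor: the decomposition into cosecant squares explains why the answer is so clean, and your permutation argument for $S_2$ (that $m=\tfrac12 j$ is odd and coprime to $n$, hence multiplication by $m$ permutes the odd residues modulo $n$) makes fully rigorous the $j$-independence that the paper only asserts before setting $j=2$. What the paper's method buys in exchange is uniformity: the same kernel-times-rational-function template handles all four theorems, including Theorem \ref{thmh1} where the analogous elementary reduction is less immediate, and the classical $\csc^2$ identity you invoke is itself most easily obtained by the very residue computation the paper performs.
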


(In \cite[Thm.~1.1, no.~3]{harshitha}, replace $p$ by $j$.)

\begin{theorem}\label{thmh3} Let $n$ be  an odd natural number, and let $ j=2p$ be an even positive integer, where $(j,n)=1$.  Then
\begin{equation}\label{hh11}
\sum_{k=0}^{\tf{n-3}{2}} \df{\sin\left(\dfrac{ (j-1)(2k+1)\pi}{2n}\right)\sin\left(\dfrac{ (j+1)(2k+1)\pi}{2n}\right)}
{\sin^2\left(\dfrac{ (2k+1)\pi}{2n}\right)\sin^2\left(\dfrac{j(2k+1)\pi}{2n}\right)}=\df{n^2-1}{3}.
\end{equation}
\end{theorem}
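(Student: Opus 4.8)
The plan is to collapse each summand with a product-to-sum identity before summing. Writing $\theta=\dfrac{(2k+1)\pi}{2n}$, one has $\sin((j-1)\theta)\sin((j+1)\theta)=\tfrac12(\cos 2\theta-\cos 2j\theta)=\sin^2(j\theta)-\sin^2\theta$, so after dividing by $\sin^2\theta\,\sin^2(j\theta)$ every summand becomes $\csc^2\theta-\csc^2(j\theta)$. Hence the left side of \eqref{hh11} equals $S_1-S_2$, where $S_1=\sum_{k=0}^{(n-3)/2}\csc^2\!\left(\dfrac{(2k+1)\pi}{2n}\right)$ and, since $j=2p$, $S_2=\sum_{k=0}^{(n-3)/2}\csc^2\!\left(\dfrac{p(2k+1)\pi}{n}\right)$. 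The task thus reduces to evaluating two cosecant-squared sums.

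For $S_1$ I would invoke the classical evaluation $\sum_{r=1}^{N-1}\csc^2(r\pi/N)=\dfrac{N^2-1}{3}$, which is itself a short contour-integration exercise. Taking $N=2n$ and separating the index by parity, the even part $r=2l$ collapses to $\sum_{l=1}^{n-1}\csc^2(l\pi/n)=\dfrac{n^2-1}{3}$, so the odd part equals $\dfrac{4n^2-1}{3}-\dfrac{n^2-1}{3}=n^2$. Because $n$ is odd, the single central term $r=n$ contributes $\csc^2(\pi/2)=1$, while the remaining odd indices pair off under $r\mapsto 2n-r$ into two copies of $S_1$. Thus $2S_1+1=n^2$ and $S_1=\dfrac{n^2-1}{2}$.

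The crux is $S_2$, and here the hypotheses enter decisively. Since $n$ is odd, the coprimality $(j,n)=1$ is equivalent to $(p,n)=1$. Upon reduction modulo $n$, the odd integers in $[1,2n-1]$ form a complete residue system (this is exactly where the oddness of $n$ is used), the residue $0$ occurring only at $m=n$; multiplication by $p$ then permutes the nonzero classes, and $\csc^2(pm\pi/n)$ depends only on $pm$ modulo $n$. As the summand is invariant under the folding $m\mapsto 2n-m$, which swaps the odd $m<n$ with the odd $m>n$, the full sum over odd $m\in[1,2n-1]\setminus\{n\}$ equals $2S_2$; on the other hand, after the permutation by $p$ it equals $\sum_{r=1}^{n-1}\csc^2(r\pi/n)=\dfrac{n^2-1}{3}$. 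Hence $S_2=\dfrac{n^2-1}{6}$.

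Combining, $S_1-S_2=\dfrac{n^2-1}{2}-\dfrac{n^2-1}{6}=\dfrac{n^2-1}{3}$, which is \eqref{hh11}. I expect the main obstacle to lie in the evaluation of $S_2$: one must verify carefully that the odd residues genuinely exhaust $\Z/n\Z$ (this fails for even $n$, which is why the parity hypothesis is indispensable), that $(j,n)=1$ is precisely what turns multiplication by $p$ into a bijection of the nonzero classes, and that the folding $m\mapsto 2n-m$ is correctly reconciled with the truncated range of summation. Should one prefer to avoid the classical identity altogether, both $S_1$ and $S_2$ can instead be produced directly by residues: integrating $\pi\tan(n\pi z)\,\csc^2(\pi z)$---whose poles at $z=(2k+1)/(2n)$ single out precisely the odd multiples, while $\tan$ stays bounded and $\csc^2$ decays on horizontal lines---around an expanding rectangle and collecting the contribution of the double pole at the origin.
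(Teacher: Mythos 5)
Your proof is correct, but it takes a genuinely different route from the paper's. The paper first argues that the hypotheses make the sum independent of $j$, so that one may take $j=2$, and then integrates $f(z)=\sin(3\pi z/n)\big/\bigl(\sin(\pi z/n)\sin^2(2\pi z/n)(e^{2\pi iz}+1)\bigr)$ around a large rectangle, reading off the answer from the residues at the poles of orders $2$ and $3$ at $z=0$ and $z=n/2$ (computed by hand or with \emph{Mathematica}). You instead collapse each summand to $\csc^2\theta-\csc^2(j\theta)$ by the product-to-sum identity, which reduces the whole theorem to the classical evaluation $\sum_{r=1}^{N-1}\csc^2(r\pi/N)=(N^2-1)/3$ together with two elementary counting arguments: the parity splitting at $N=2n$ that yields $S_1=(n^2-1)/2$, and the observation that the odd integers in $[1,2n-1]$ form a complete residue system modulo the odd number $n$, so that multiplication by $p$ permutes the nonzero classes and the folding $m\mapsto 2n-m$ gives $S_2=(n^2-1)/6$. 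I have checked these steps and they all hold, with both hypotheses ($n$ odd, $(j,n)=1$) entering exactly where you say they do. What your approach buys is that it handles general even $j=2p$ directly and makes the $j$-independence of the value transparent, at the cost of importing the classical cosecant-square sum (itself provable by the very contour method the paper uses); the paper's argument is less elementary but is uniform across all four theorems of that section, whereas yours trades the unexplained residue computations at the higher-order poles for short arithmetic with residue systems.
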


\begin{theorem}\label{thmh4}
Let $n$ be an odd positive integer, and let $j$ be a positive integer such that $(j,n)=1$ and $j(2k + 1)$ is not
a multiple of $n$, where $0\leq k \leq n-1$.  Then
\begin{equation}\label{hh14}
\sum_{k=0}^{\tf{n-3}{2}} \df{\sin\left(\dfrac{ (j-1)(2k+1)\pi}{n}\right)\sin\left(\dfrac{ (j+1)(2k+1)\pi}{n}\right)}
{\sin^2\left(\dfrac{ (2k+1)\pi}{n}\right)\sin^2\left(\dfrac{j(2k+1)\pi}{n}\right)}=0.
\end{equation}
\end{theorem}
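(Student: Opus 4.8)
The plan is to bypass contour integration for this particular identity: since the right-hand side is $0$, one need not evaluate any cosecant sum in closed form, and a pure symmetry argument suffices. First I would simplify the $k$-th summand. Writing $\theta_k=(2k+1)\pi/n$ and applying the product-to-sum formula to the numerator gives $\sin((j-1)\theta_k)\sin((j+1)\theta_k)=\tf12(\cos(2\theta_k)-\cos(2j\theta_k))$. Substituting $\cos2\alpha=1-2\sin^2\alpha$ into each cosine collapses the numerator to $\sin^2(j\theta_k)-\sin^2\theta_k$, so that the $k$-th term of \eqref{hh14} becomes
\[
\frac{\sin^2(j\theta_k)-\sin^2\theta_k}{\sin^2\theta_k\,\sin^2(j\theta_k)}
=\frac{1}{\sin^2\theta_k}-\frac{1}{\sin^2(j\theta_k)} .
\]
Hence \eqref{hh14} is equivalent to the single assertion
\[
\sum_{k=0}^{\tf{n-3}{2}}\frac{1}{\sin^2((2k+1)\pi/n)}
=\sum_{k=0}^{\tf{n-3}{2}}\frac{1}{\sin^2(j(2k+1)\pi/n)} .
\]

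Next I would recast both sides as sums over residue classes. The function $\csc^2(m\pi/n)$ depends only on $m$ modulo $n$ and is invariant under $m\mapsto -m$, so it is really a function on the $\tf12(n-1)$ sign-classes of the nonzero residues of $\Z/n\Z$. Because $n$ is odd, in each pair $\{m,\,n-m\}$ the two members have opposite parity, so exactly one is odd; consequently the index set $\{1,3,5,\dots,n-2\}=\{2k+1:0\le k\le\tf{n-3}{2}\}$ is a complete system of representatives for these sign-classes. Therefore the left-hand side equals $\sum_{r=1}^{(n-1)/2}\csc^2(r\pi/n)$.

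For the right-hand side I would use the hypothesis $(j,n)=1$: multiplication by $j$ is a bijection of $\Z/n\Z$ sending $\{m,-m\}$ to $\{jm,-jm\}$, hence it permutes the sign-classes. As $k$ runs over the summation range, $j(2k+1)$ therefore runs over one representative of each sign-class, so the right-hand side also equals $\sum_{r=1}^{(n-1)/2}\csc^2(r\pi/n)$. Subtracting the two equal quantities yields $0$, which is precisely \eqref{hh14}. The side condition $n\nmid j(2k+1)$ (which is automatic in our range, since $1\le 2k+1\le n-2$ and $(j,n)=1$) serves only to ensure that no denominator vanishes.

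The one place requiring care is the combinatorial bookkeeping in the permutation step: I must verify that $\{1,3,\dots,n-2\}$ meets every sign-class exactly once (this is where the oddness of $n$ is essential) and that multiplication by the unit $j$ respects the identification $m\sim -m$. Once these two facts are nailed down the result is immediate. The contrast with Theorems \ref{thmh1}--\ref{thmh3} is instructive: there the answer is a nonzero closed form and one genuinely needs the value of the underlying cosecant-squared sum, whereas here the symmetry alone forces the difference to vanish.
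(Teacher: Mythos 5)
Your proof is correct, and it takes a route that is close in spirit to the paper's but differs in execution in a way worth noting. The paper's proof asserts that, because $(j,n)=1$, the sum over a complete residue system is independent of $j$, and then sets $j=1$, whereupon the factor $\sin\bigl((j-1)(2k+1)\pi/n\bigr)$ vanishes and every term is trivially $0$. You instead first collapse the summand via $\sin((j-1)\theta)\sin((j+1)\theta)=\sin^2(j\theta)-\sin^2\theta$ to the difference $\csc^2\theta_k-\csc^2(j\theta_k)$, and then cancel the two resulting cosecant sums by the bijection $m\mapsto jm$ on sign-classes of residues modulo $n$. The two arguments rest on the same permutation fact, but your decomposition actually supplies the justification that the paper leaves implicit: the summand depends on $2k+1$ and on $j(2k+1)$ simultaneously, so the bare statement that ``$j(2k+1)$ runs over a complete residue system'' does not by itself yield $j$-independence of the sum; it does so only after the summand has been separated into a piece depending on $2k+1$ alone plus a piece depending on $j(2k+1)$ alone, which is precisely what your identity accomplishes. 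Your bookkeeping is also sound: for odd $n$ each pair $\{m,n-m\}$ contains exactly one odd element, so $\{1,3,\dots,n-2\}$ represents each sign-class once, and multiplication by the unit $j$ permutes these classes. One could add that your observation about the hypothesis is apt: the stated condition that $j(2k+1)$ not be a multiple of $n$ for $0\le k\le n-1$ actually fails at $k=\tfrac12(n-1)$, where $2k+1=n$, but that index lies outside the summation range, within which the condition is automatic from $(j,n)=1$.
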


In each of the theorems above, because of the given hypotheses on $n$ and $j$, the value of the sum is independent of the residue class modulo $n$ over which the index of summation runs.  Note that this is evinced by the fact that the values of the four sums in Theorems \ref{thmh1}--\ref{thmh4} are independent of $j$.  Thus, in our proofs, we fix the value of $j$ without loss of generality.  Moreover, by replacing the summation index $k$ by $n-k$ or $k-(n-1)$ (depending on the sum), the value of the sum over a full residue system modulo $n$ is twice that over a half-residue system.

In each proof, we integrate over a positively oriented rectangle $C_N$ with vertical sides passing through $0$ and $n$ and horizontal sides $z=x+\pm iN, 0\leq x\leq n$.  Furthermore, $C_N$ has semicircular indentations of radius $\epsilon, 0<\epsilon<\tf12$, about $z=0$ and $z=n$, with the former indentation in the left half-plane and the latter indentation lying to the left of the line $z=n+iy, -N\leq y \leq N$.  In each case, we evaluate the integral of a function $f(z)$ over $C_N$ with the use of the residue theorem.  Throughout,  $R_a$ denotes the residue of the given function $f(z)$ at a pole $z=a$.

 We next evaluate the integral directly and then let $N\to\infty$.  Each of the functions $f(z)$ has period $n$, and consequently the integrals over the vertical sides of $C_N$ cancel.
 Second, using the definition
$$\sin \pi z=\frac{1}{2i}(e^{\pi i z}-e^{-\pi iz}),$$
we can easily show that the integrals over the horizontal sides of $C_N$ tend to $0$ as $N\to\infty$.  In summary,
\begin{equation}\label{hh6}
\lim_{N\to\infty}\int_{C_N}f(z)dz=0.
\end{equation}
By combining the two evaluations, we deduce the identities in our four theorems above.

Since these details are the same for each of our proofs, we mention them only briefly, if at all.

\begin{proof}[Proof of Theorem 2.1.] %(\emph{of Theorem \ref{thmh1}})

 Because $ j \equiv 2\pmod4$ and $(\tf12 j, \tf12 n)=1$, the sum on the left-hand side of \eqref{hh1} is independent of the half-residue system $ jk$, $0\leq k\leq \tf12 n-1$.  Therefore, it suffices to prove Theorem \ref{thmh1} for $j=2$. As indicated above, the value of the sum in \eqref{hh1} is one-half of the value of the sum for which $1\leq k\leq n-1, k\neq \tf12 n$.

 Let
\begin{equation}\label{hh2}
f(z):= \df{\sin\left(\dfrac{ \pi z}{n}\right)\sin\left(\dfrac{3\pi z}{n}\right)}
{\sin^2\left(\dfrac{\pi z}{n}\right)\sin^2\left(\dfrac{2\pi z}{n}\right)(e^{2\pi iz}-1)}
=\df{\sin\left(\dfrac{3\pi z}{n}\right)}
{\sin\left(\dfrac{\pi z}{n}\right)\sin^2\left(\dfrac{2\pi z}{n}\right)(e^{2\pi iz}-1)}.
\end{equation}
The function $f(z)$ is analytic on the interior of $C_N$, except for a triple pole at $z=0$, simple poles at $z=1,2, \dots, n-1$, and a double pole at $z=\tf12 n$. Let $R_a$ denote the residue of $f(z)$ at a pole $z=a$.  By the residue theorem,
\begin{align}\label{hh3}
\int_{C_N}f(z)dz=&\sum_{\substack{k=1\\k\neq n/2}}^{n-1} \df{\sin\left(\dfrac{k\pi}{n}\right)\sin\left(\dfrac{ 3k\pi}{n}\right)}
{\sin^2\left(\dfrac{ k\pi}{n}\right)\sin^2\left(\dfrac{2k\pi}{n}\right)}+2\pi i(R_0+R_{n/2})\notag\\
=&2\sum_{k=1}^{\tf12 n-1} \df{\sin\left(\dfrac{k\pi}{n}\right)\sin\left(\dfrac{ 3k\pi}{n}\right)}
{\sin^2\left(\dfrac{ k\pi}{n}\right)\sin^2\left(\dfrac{2k\pi}{n}\right)}+2\pi i(R_0+R_{n/2}).
\end{align}
Calculations, by hand or via \emph{Mathematica}, easily give
\begin{equation}\label{hh4}
2\pi i\, R_0=-\df{n^2}{4} \quad\text{and}\quad 2\pi i\, R_{n/2}=\df{n^2+8}{12}.
\end{equation}
Hence, from \eqref{hh3} and \eqref{hh4},
\begin{equation}\label{hh5}
\int_{C_N}f(z)dz=2\sum_{k=1}^{\tf12 n-1} \df{\sin\left(\dfrac{k\pi}{n}\right)\sin\left(\dfrac{ 3k\pi}{n}\right)}
{\sin^2\left(\dfrac{ k\pi}{n}\right)\sin^2\left(\dfrac{2k\pi}{n}\right)}+\df{-n^2+4}{6}.
\end{equation}

Combining \eqref{hh5} and \eqref{hh6}, we readily deduce \eqref{hh1} with $j=2$.
\end{proof}

\begin{proof}[Proof of Theorem 2.2.] %(\emph{of Theorem \ref{thmh2}})

 On the interior of $C_N$,
\begin{equation}\label{hh7}
f(z):= \df{\sin\left(\dfrac{ \pi z}{n}\right)\sin\left(\dfrac{3\pi z}{n}\right)}
{\sin^2\left(\dfrac{\pi z}{n}\right)\sin^2\left(\dfrac{2\pi z}{n}\right)(e^{2\pi iz}+1)}\notag
=\df{\sin\left(\dfrac{3\pi z}{n}\right)}
{\sin\left(\dfrac{\pi z}{n}\right)\sin^2\left(\dfrac{2\pi z}{n}\right)(e^{2\pi iz}+1)}
\end{equation}
has a double pole at $z=0$, simple poles at $z=\tf12 (2k+1), 0\leq k \leq \tf12 n-1$, and a double pole at $z=\tf12 n$. Thus, by the residue theorem,
\begin{equation}\label{hh8}
\int_{C_N}f(z)dz=
-2\sum_{k=0}^{\tf12 n-1} \df{\sin\left(\dfrac{ (2k+1)\pi}{2n}\right)\sin\left(\dfrac{ 3(2k+1)\pi}{2n}\right)}
{\sin^2\left(\dfrac{ (2k+1)\pi}{2n}\right)\sin^2\left(\dfrac{2(2k+1)\pi}{2n}\right)}+2\pi i(R_0+R_{n/2}).
\end{equation}
Using hand computation or \emph{Mathematica}, we find that
\begin{equation}\label{hh10}
2\pi i\, R_0=\df{3n^2}{4}\quad \text{and}\quad 2\pi i\,R_{n/2} =-\df{n^2}{4}.
\end{equation}
Putting \eqref{hh10} into \eqref{hh8}, using \eqref{hh6},
and  simplifying, we deduce \eqref{hh9} with $j=2$.
\end{proof}

\begin{proof}[Proof of Theorem 2.3.]% (\emph{of Theorem \ref{thmh3}})

 Let $j=2$ and
\begin{equation}\label{hh12}
f(z):= \df{\sin\left(\dfrac{ \pi z}{n}\right)\sin\left(\dfrac{3\pi z}{n}\right)}
{\sin^2\left(\dfrac{\pi z}{n}\right)\sin^2\left(\dfrac{2\pi z}{n}\right)(e^{2\pi iz}+1)}.
\end{equation}
The function $f(z)$ has a pole of order 2 at the origin, simple poles at $z=\tf12 (2k+1), 0\leq k \leq n-1$, and a pole of order 3 at $z=\tf12 n$.
Applying the residue theorem, we deduce that
\begin{equation}\label{hh13}
\int_{C_N}f(z)dz=
-2\sum_{k=0}^{\tf{n-3}{2}} \df{\sin\left(\dfrac{ (2k+1)\pi}{2n}\right)\sin\left(\dfrac{ 3(2k+1)\pi}{2n}\right)}
{\sin^2\left(\dfrac{ (2k+1)\pi}{2n}\right)\sin^2\left(\dfrac{(2k+1)\pi}{n}\right)}+\df{3n^2}{4}-\df{n^2+8}{12}.
\end{equation}
Employing \eqref{hh6}
and \eqref{hh13}, and simplifying, we complete the proof of \eqref{hh11}.
\end{proof}

\begin{proof}[Proof of Theorem 2.4.] %(\emph{of Theorem \ref{thmh4}})

Since $(j,n)= 1$ and $j(2k+1)$,  is not a multiple of $n$,  $0\leq k \leq n-1$,  $j(2k+1)$ runs over the complete residue system  $0\leq k \leq n-1$ modulo $n$ for any such $j$.   In other words, the value of the sum in Theorem \ref{thmh4} taken over $0\leq k\leq n-1$ is independent of the value of $j$.  To reiterate what we emphasized above, the terms with index $k$ and $n-k-1$ are identical. Thus, if we take $j=1$, the sum in \eqref{hh14} is trivially equal to 0.
\end{proof}

\section{Two Conjectures from \cite{harshitha}}
In \cite{harshitha}, the authors made the following two conjectures:
\begin{conjecture}\label{conjecture1}
\begin{align}
\lim_{k\to\infty} S_1:=\sum_{j=0}^{2k-1}(-1)^j\sin^2\left(\df{(2j+1)\pi}{8k+2}\right)=-\df12,\label{h1}\\
\lim_{k\to\infty} S_2:=\sum_{j=0}^{2k}(-1)^j\sin^2\left(\df{(2j+1)\pi}{8k+6}\right)=\df12.\label{h2}
\end{align}
\end{conjecture}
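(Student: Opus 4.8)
The plan is to evaluate each sum $S_1$ and $S_2$ in closed form for each finite $k$, and then take the limit, rather than attempting an asymptotic analysis directly. These are sums of $(-1)^j \sin^2$ of equally spaced angles, so the natural first move is to linearize via $\sin^2\theta = \tfrac12(1-\cos 2\theta)$. For $S_1$, this gives
\begin{equation*}
S_1 = \tfrac12\sum_{j=0}^{2k-1}(-1)^j - \tfrac12\sum_{j=0}^{2k-1}(-1)^j\cos\left(\df{(2j+1)\pi}{4k+1}\right).
\end{equation*}
The first sum is an alternating sum of $1$'s over an even number of terms, hence $0$, so everything reduces to evaluating the alternating cosine sum. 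The same reduction applies to $S_2$, where the number of terms is odd, so the bare alternating sum contributes $1$ (times $\tfrac12$), and again the main content is an alternating cosine sum.

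The second step is to evaluate $\sum_j (-1)^j \cos(\text{linear in } j)$ in closed form. Writing $\cos\phi = \Re\, e^{i\phi}$ and absorbing the $(-1)^j = e^{i\pi j}$ into the exponential, each cosine sum becomes the real part of a finite geometric series in a root of unity, which sums in closed form to a ratio of sines. Concretely, with $\theta_j = (2j+1)\pi/(4k+1)$ for $S_1$, the combined phase is $e^{i\pi j}e^{2i\theta_j}$, a geometric series with ratio $e^{i(\pi + 4\pi/(4k+1))}$; summing it and taking the real part yields a closed-form expression of Dirichlet-kernel type. I would carry out the telescoping/geometric summation carefully, tracking the endpoint terms, to obtain an exact rational-trigonometric value of $S_1(k)$ and $S_2(k)$ for every $k$.

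The final step is to let $k\to\infty$ in these closed forms. I expect the geometric-series denominators to behave like $\sin(2\pi/(4k+1))$ and $\sin(2\pi/(4k+3))$, which vanish like $1/k$, while the numerators (differences of sines evaluated near the endpoints) also tend to $0$; so the limit will be of the form $0/0$ and must be extracted by a first-order Taylor expansion in $1/k$, using $\sin x \sim x$ and $\cos x \sim 1$ as the arguments shrink. I anticipate that this careful asymptotic bookkeeping is the main obstacle: the bare alternating sums already contribute $0$ and $\tfrac12$ respectively, matching the signs of the two claimed answers, so the cosine sums must contribute $-\tfrac12$ in both cases, and verifying that the leading-order expansion produces exactly $\mp\tfrac12$ (with the correct sign in each case) is where the delicacy lies. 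An alternative and possibly cleaner route, which I would keep in reserve, is to pair consecutive terms $j=2m$ and $j=2m+1$ to convert each alternating sum into a sum of products of sines via $\sin^2 A - \sin^2 B = \sin(A+B)\sin(A-B)$; this often reveals a telescoping or a manifestly convergent Riemann-sum structure whose limit is more transparent.
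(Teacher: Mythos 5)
Your proposal is sound and, when carried out, does prove both limits, but it takes a genuinely different route from the paper. The paper pairs consecutive terms $j$ and $j+1$ and uses $\sin^2(u+t)-\sin^2(u-t)=\sin 2u\,\sin 2t$ to collapse the alternating sum into a single sum of sines in arithmetic progression, which a standard formula evaluates in closed form; this yields the exact identity $S(a,b,c)=-\sin^2\bigl(\tfrac{4\pi ak}{bk+c}\bigr)/\bigl(2\cos\bigl(\tfrac{2\pi a}{bk+c}\bigr)\bigr)$ for a general three-parameter family, from which both conjectured limits drop out immediately. (Your ``reserve'' alternative in the last sentence is precisely this argument.) Your primary route --- linearizing via $\sin^2\theta=\tfrac12(1-\cos 2\theta)$ and summing the resulting alternating cosine sum as a geometric series --- is equally elementary and also gives an exact closed form for each finite $k$; what the paper's version buys is the clean general formula covering both conjectures (and more) at once. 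Two corrections to your anticipated bookkeeping: first, the $0/0$ delicacy you fear does not arise, because the geometric ratio is $e^{i(\pi+4\pi/(bk+c))}$, which is near $-1$ rather than $1$, so the denominator $1+e^{4\pi i/(bk+c)}$ tends to $2$ and the limit is immediate; second, the cosine sums do \emph{not} both contribute $-\tfrac12$: for $S_1$ the alternating cosine sum tends to $1$ (contributing $-\tfrac12$), while for $S_2$ it tends to $0$, so that $S_2\to\tfrac12+0$. Neither point damages the method, but the second would have sent you chasing the wrong target value in the final verification.
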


 The truth of Conjecture \ref{conjecture1} is a consequence of the following theorem.

\begin{theorem}\label{abcde} Let $a,b$, and $c$ be positive.  Then
\begin{equation}\label{abc}
S(a,b,c):=\sum_{j=0}^{2k-1}(-1)^j\sin^2\left(\df{(2j+1)\pi a}{bk+c}\right)=
-\df{\sin^2\left(\df{4\pi ak}{bk+c}\right)}{2\cos\left(\df{2\pi a}{bk+c}\right)}.
\end{equation}
\end{theorem}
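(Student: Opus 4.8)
The plan is to linearize the summand and reduce the problem to a single alternating geometric sum. Writing $\theta := \pi a/(bk+c)$ and using $\sin^2 x = \tf12(1-\cos 2x)$, the summand becomes $\tf12\bigl(1-\cos(2(2j+1)\theta)\bigr)$. Since there are $2k$ terms, the alternating sum of the constant $\tf12$ vanishes, so
$$S(a,b,c)=-\tf12\sum_{j=0}^{2k-1}(-1)^j\cos\bigl((2j+1)\phi\bigr),\qquad \phi:=2\theta=\df{2\pi a}{bk+c}.$$
Thus everything reduces to evaluating the cosine sum on the right.

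For that sum I would pass to complex exponentials, writing $\cos((2j+1)\phi)=\Re\,e^{i(2j+1)\phi}$ and factoring out $e^{i\phi}$, so that the remaining sum is $\sum_{j=0}^{2k-1}(-e^{2i\phi})^j$, a finite geometric series with ratio $w:=-e^{2i\phi}$. Here the parity is crucial: since the number of terms is $2k$ (even), one has $w^{2k}=e^{4ik\phi}$ with no stray sign, so the series equals $(e^{4ik\phi}-1)/(w-1)=-(e^{4ik\phi}-1)/(e^{2i\phi}+1)$. The decisive simplification is the identity $e^{2i\phi}+1=2e^{i\phi}\cos\phi$, which cancels the factor $e^{i\phi}$ pulled out earlier and strips all phase from the denominator (implicitly using $\cos\phi\neq 0$, which the stated closed form already presupposes).

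After this cancellation the whole expression collapses to $-(e^{4ik\phi}-1)/(2\cos\phi)$, whose real part is $(1-\cos(4k\phi))/(2\cos\phi)$. Applying $1-\cos\psi=2\sin^2(\psi/2)$ with $\psi=4k\phi$ turns the numerator into $2\sin^2(2k\phi)$, giving $\sin^2(2k\phi)/\cos\phi$ for the cosine sum. Substituting $\phi=2\pi a/(bk+c)$, so that $2k\phi=4\pi ak/(bk+c)$, and inserting the outer factor $-\tf12$ yields exactly the claimed closed form. No genuine obstacle arises; the only point demanding care is the bookkeeping of signs in the geometric series — in particular verifying that the evenness of $2k$ makes $w^{2k}$ a clean exponential and that the denominator factors as $2e^{i\phi}\cos\phi$ — after which the computation is forced.
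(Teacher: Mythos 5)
Your proof is correct. You take a genuinely different, though equally elementary, route from the paper. The paper pairs the term with even index $j$ against the term with index $j+1$ and applies the difference identity $\sin^2(u+t)-\sin^2(u-t)=\sin 2u\,\sin 2t$, which reduces $S(a,b,c)$ to $-\sin\bigl(\tfrac{2\pi a}{bk+c}\bigr)\sum_{j=0}^{k-1}\sin\bigl(\tfrac{(8j+4)\pi a}{bk+c}\bigr)$; that sine sum is then evaluated by quoting the closed form for $\sum_{n=0}^{k-1}\sin(x+ny)$ from Gradshteyn--Ryzhik. You instead linearize via $\sin^2 x=\tfrac12(1-\cos 2x)$, observe that the alternating sum of the constant term vanishes because the number of terms $2k$ is even, and evaluate the remaining alternating cosine sum directly as the real part of a geometric series, with the factorization $e^{2i\phi}+1=2e^{i\phi}\cos\phi$ doing the work that $\csc(y/2)$ does in the table formula. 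The two computations are of comparable length; yours has the advantage of being self-contained (no appeal to a table identity), while the paper's pairing makes the eventual appearance of the factor $\sin\bigl(\tfrac{2\pi a}{bk+c}\bigr)=2\sin\theta\cos\theta$ slightly more transparent. Both arguments tacitly require the nonvanishing of $\cos\bigl(\tfrac{2\pi a}{bk+c}\bigr)$ (equivalently, of the cosecant factor in the paper), a point you rightly flag and which the stated closed form presupposes in any case.
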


If we let $a=1, b=8$, and  $c=2$, the sum \eqref{abc} reduces to $S_1$ in \eqref{h1}.  Letting $k\to\infty$, we immediately deduce \eqref{h1}.  Next, let $a=1$, $b=8$, and $c=6$.  If we add the summand with $j=2k$ to the sum in \eqref{abc} with $a=1$, $b=8$, and $c=6$, we obtain the sum in \eqref{h2}.  Since this `extra' term tends to 1 as $k\to\infty$, \eqref{h2} readily follows from Theorem \ref{abcde}.

\begin{proof}    First,
\begin{align}\label{t1}
\sin^2(u+t)-\sin^2(u-t)=&(\sin u \cos t+\sin t \cos u)^2 -(\sin u \cos t-\sin t \cos u)^2\notag\\
=&4\sin u \cos u \,\,\sin t\cos t =\sin 2u\,\,\sin 2t.
\end{align}
 With
\begin{equation*}
u=\df{(2j+2)\pi a}{bk+c}\quad \text{and}\quad t=\df{\pi a}{bk+c}
\end{equation*}
in \eqref{t1}, we find that
\begin{align}\label{abcd}
S(a,b,c)=&-\sum_{\substack{j=0\\j\,\,\text{even}}}^{2k-1}
\left(\sin^2\left(\df{(2j+1)\pi a}{bk+c}\right)-\sin^2\left(\df{(2j+3)\pi a}{bk+c}\right)\right)\notag\\
=&-\sin\left(\df{2\pi a}{bk+c}\right)\sum_{\substack{j=0\\j\,\,\text{even}}}^{2k-1}\sin\left(\df{(4j+4)\pi a}{bk+c}\right)\notag\\
=&-\sin\left(\df{2\pi a}{bk+c}\right)\sum_{j=0}^{k-1}\sin\left(\df{(8j+4)\pi a}{bk+c}\right).
\end{align}
Recall \cite[p.~36, formula 1.341, no.~1]{gr}
\begin{equation}\label{t3}
\sum_{n=0}^{k-1}\sin(x+ny)=\sin\left(x+\df{(k-1)y}{2}\right)\sin\left(\df{ky}{2}\right)\csc\left(\df{y}{2}\right).
\end{equation}
Applying \eqref{t3} with
\begin{equation*}
x=\df{4\pi a}{bk+c}\quad\text{and}\quad y=\df{8\pi a}{bk+c},
\end{equation*}
we deduce from \eqref{abcd} that
\begin{align*}
S(a,b,c)=&-\sin\left(\df{2\pi a}{bk+c}\right)\sin^2\left(\df{4\pi ak}{bk+c}\right)\csc\left(\df{4\pi a}{bk+c}\right)\\
=&-\df{\sin^2\left(\df{4\pi ak}{bk+c}\right)}{2\cos\left(\df{2\pi a}{bk+c}\right)},
\end{align*}
which completes the proof of \eqref{abc}.
\end{proof}

\section{Elementary Trigonometric Sums Arising from Ramanujan's Modular Equations}

In \cite{palestine}, G.~Vinay, H.~T.~Shwetha, and K.~N.~Harshitha evaluated 7 sums of quotients of $\sin$'s by taking 7 modular equations of Ramanujan, recasting them in terms of theta functions, and then calculating their limits as $q\to 1^-$.  One of the identities arises from a modular equation of degree 15, two of them depend on modular equations of degree 17, and four rest upon modular equations of degree 13.  We show that each of them can be proved in an elementary fashion, although one of those of degree 13 is more recondite than the others, and needed Gauss's theory of cyclotomy to prove it.

\begin{theorem}\label{thm1.7} We have
\begin{equation}\label{1.7}
L_1(15):=\df{\sin(2\pi/15)}{\sin(\pi/15)}-\df{\sin(7\pi/15)}{\sin(4\pi/15)}-\df{\sin(3\pi/15)}{\sin(6\pi/15)}=0.
\end{equation}
\end{theorem}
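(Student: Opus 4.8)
The plan is to prove \eqref{1.7} by direct trigonometric manipulation, reducing each of the three quotients to an expression that allows cancellation. First I would rewrite the angles using the common denominator $15$ and simplify where possible: the third term $\sin(3\pi/15)/\sin(6\pi/15) = \sin(\pi/5)/\sin(2\pi/5)$ collapses via the double-angle formula $\sin(2\pi/5) = 2\sin(\pi/5)\cos(\pi/5)$ to $1/(2\cos(\pi/5))$. Thus the identity reduces to showing
\begin{equation*}
\frac{\sin(2\pi/15)}{\sin(\pi/15)} - \frac{\sin(7\pi/15)}{\sin(4\pi/15)} = \frac{1}{2\cos(\pi/5)}.
\end{equation*}

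Next I would attack the remaining two quotients. For $\sin(2\pi/15)/\sin(\pi/15)$, the double-angle identity again gives $2\cos(\pi/15)$. The term $\sin(7\pi/15)/\sin(4\pi/15)$ is less immediate; here I would use product-to-sum or the identity $\sin(7\pi/15) = \cos(\pi/2 - 7\pi/15) = \cos(\pi/30)$ and similarly $\sin(4\pi/15) = \cos(7\pi/30)$, converting the quotient to $\cos(\pi/30)/\cos(7\pi/30)$. The goal is then to verify that
\begin{equation*}
2\cos(\pi/15) - \frac{\cos(\pi/30)}{\cos(7\pi/30)} = \frac{1}{2\cos(\pi/5)},
\end{equation*}
which after clearing denominators becomes a polynomial relation among cosines of multiples of $\pi/30$.

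An alternative and likely cleaner route is to put everything over the common denominator and reduce the whole identity to a statement about the minimal polynomial of $\zeta = e^{2\pi i/15}$, a primitive $15$th root of unity. Writing each $\sin$ as $(\zeta^m - \zeta^{-m})/(2i)$, the claim \eqref{1.7} becomes a rational identity in $\zeta$; clearing denominators yields a polynomial in $\zeta$ that I would check is divisible by the cyclotomic polynomial $\Phi_{15}(x)$, which has degree $8$. Since $L_1(15)$ is manifestly real and the Galois group $\Gal(\Q(\zeta)/\Q) \cong (\Z/15\Z)^\times$ permutes the roots, one only needs the numerator to vanish at $\zeta$, equivalently to lie in the ideal generated by $\Phi_{15}$.

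The main obstacle will be the middle term $\sin(7\pi/15)/\sin(4\pi/15)$, whose angles do not simplify under a single double-angle step and therefore resist the elementary cancellation that disposes of the first and third terms. I expect the cleanest resolution is the cyclotomic approach: reducing modulo $\Phi_{15}(x)$ sidesteps any delicate numerical coincidences among the cosines and turns the problem into a finite, mechanical polynomial division. The trade-off is that this route obscures the trigonometric structure, so if a purely elementary proof is wanted, the crux is establishing the three-cosine relation above, which ultimately encodes the same arithmetic fact about $\Q(\zeta_{15})$.
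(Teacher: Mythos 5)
Your proposal stalls exactly where the paper's proof is easiest, because it misses one elementary observation: the supplementary-angle identity $\sin\theta=\sin(\pi-\theta)$. Writing $\sin(7\pi/15)=\sin(8\pi/15)$ and $\sin(3\pi/15)=\sin(12\pi/15)$ turns \emph{every} quotient in \eqref{1.7} into a double-angle ratio, so that
\begin{equation*}
L_1(15)=2\cos(\pi/15)-2\cos(4\pi/15)-2\cos(6\pi/15).
\end{equation*}
The sum-to-product formula then gives $\cos(4\pi/15)+\cos(6\pi/15)=2\cos(5\pi/15)\cos(\pi/15)=2\cos(\pi/3)\cos(\pi/15)=\cos(\pi/15)$, and the expression collapses to $0$. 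That is the paper's entire proof. Your assertion that the middle term ``resists the elementary cancellation'' is therefore incorrect: it cancels just as readily as the other two once $7\pi/15$ is replaced by its supplement $8\pi/15$; your conversion of that term to $\cos(\pi/30)/\cos(7\pi/30)$ moves you away from, rather than toward, the cancellation.

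As written, neither of your two routes is actually completed. The first ends at an unverified three-term relation among cosines of multiples of $\pi/30$, which is precisely the content of the theorem and is left unproved. The second, cyclotomic route is sound in principle --- expressing each sine in terms of $\zeta=e^{\pi i/15}$ (note you need a primitive $30$th root, not a $15$th, since the angles are odd multiples of $\pi/15$), clearing denominators, and reducing modulo the relevant cyclotomic polynomial would mechanically verify the identity --- but you only describe this computation without performing it, so the crux of the argument is deferred rather than supplied. To turn the sketch into a proof, either carry out that polynomial reduction explicitly or, far more simply, insert the supplement step above.
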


\begin{proof}   Rewrite $L_1(15)$ and then use the `double angle' formula for $\sin$ and the `addition' formula for $\cos$. Thus, we find that
\begin{align*}
L_1(15)=&\df{\sin(2\pi/15)}{\sin(\pi/15)}-\df{\sin(8\pi/15)}{\sin(4\pi/15)}-\df{\sin(12\pi/15)}{\sin(6\pi/15)}\\
=&2\cos(\pi/15)-2\cos(4\pi/15)-2\cos(6\pi/15)\\
=&2\cos(\pi/15)-4\cos(5\pi/15)\cos(\pi/15)\\
=&2\cos(\pi/15)\{1-2\cos(\pi/3)\}=2\cos(\pi/15)(1-2\cdot\tf12)=0.
\end{align*}
Hence, \eqref{1.7} has been proved.
\end{proof}

\begin{theorem}\label{thm1.8}  We have
\begin{align*}%\label{1.8}
L_1(17):=&\df{\sin(6\pi/17)}{\sin(3\pi/17)}-\df{\sin(4\pi/17)}{\sin(2\pi/17)}-\df{\sin(8\pi/17)}{\sin(4\pi/17)}+\df{\sin(2\pi/17)}{\sin(\pi/17)}
\notag\\
&+\df{\sin(7\pi/17)}{\sin(5\pi/17)}-\df{\sin(5\pi/17)}{\sin(6\pi/17)}+\df{\sin(3\pi/17)}{\sin(7\pi/17)}-\df{\sin(\pi/17)}{\sin(8\pi/17)}=1.
\end{align*}
\end{theorem}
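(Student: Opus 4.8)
The plan is to follow the same strategy used for $L_1(15)$ in Theorem \ref{thm1.7}: collapse every one of the eight quotients to a single cosine through the double-angle identity $\sin(2x)=2\sin x\cos x$, and then evaluate the resulting alternating cosine sum. Four of the quotients,
$$\frac{\sin(6\pi/17)}{\sin(3\pi/17)},\quad \frac{\sin(4\pi/17)}{\sin(2\pi/17)},\quad \frac{\sin(8\pi/17)}{\sin(4\pi/17)},\quad \frac{\sin(2\pi/17)}{\sin(\pi/17)},$$
already have numerator angle equal to twice the denominator angle, so each immediately reduces to $2\cos(3\pi/17)$, $2\cos(2\pi/17)$, $2\cos(4\pi/17)$, and $2\cos(\pi/17)$, respectively.

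The key observation is that the remaining four quotients become double-angle quotients as well, once the reflection identity $\sin(\pi-x)=\sin x$ is applied to the numerators. Indeed $\sin(7\pi/17)=\sin(10\pi/17)$, $\sin(5\pi/17)=\sin(12\pi/17)$, $\sin(3\pi/17)=\sin(14\pi/17)$, and $\sin(\pi/17)=\sin(16\pi/17)$, and in each case the new numerator angle is exactly twice the denominator angle. Hence these four quotients collapse to $2\cos(5\pi/17)$, $2\cos(6\pi/17)$, $2\cos(7\pi/17)$, and $2\cos(8\pi/17)$. Tracking the signs attached to the eight terms in the definition of $L_1(17)$, I expect everything to align into the single alternating sum
$$L_1(17)=2\sum_{m=1}^{8}(-1)^{m-1}\cos\left(\frac{m\pi}{17}\right).$$

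It then remains to show that this alternating cosine sum equals $1$. I would evaluate $S:=\sum_{m=1}^{8}(-1)^{m-1}\cos(m\pi/17)$ as the real part of a finite geometric series in $z=e^{i\pi/17}$, for which $z^{17}=-1$. One route is to note that extending the sum to $m=1,\dots,16$ reproduces $S$ a second time: pairing $m$ with $17-m$ and using $\cos((17-m)\pi/17)=-\cos(m\pi/17)$ together with the matching sign shift shows that the tail $m=9,\dots,16$ again equals $S$. Since $\sum_{m=1}^{16}(-1)^{m-1}z^m=z(1-z^{16})/(1+z)$ and $z^{16}=-z^{-1}$, this closed geometric sum equals $1$; taking real parts gives $2S=1$, so $S=\tfrac12$. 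Substituting back yields $L_1(17)=2S=1$.

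The only genuinely nonobvious step is recognizing that the four ``awkward'' quotients, whose numerator angle is not visibly twice the denominator angle, are turned into double-angle quotients after reflecting the numerator across $\pi/2$; once that is seen, the remainder is the routine geometric-series evaluation of an alternating cosine sum. In particular, no appeal to cyclotomy is needed here, that device being reserved for the more stubborn degree-$13$ identity.
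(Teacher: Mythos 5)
Your proposal is correct and follows essentially the same route as the paper: reduce each quotient to $2\cos(m\pi/17)$ via the double-angle formula (using $\sin(\pi-x)=\sin x$ for the four quotients whose numerator angle is not literally twice the denominator angle), and then evaluate $2\sum_{m=1}^{8}(-1)^{m-1}\cos(m\pi/17)$ as the real part of a geometric series in $e^{\pi i/17}$ extended to $m=1,\dots,16$. Your write-up is in fact a bit more explicit than the paper's about the reflection step and the pairing $m\leftrightarrow 17-m$ that doubles the sum.
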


\begin{proof}  Using the `double angle' formula for $\sin$, we deduce that
\begin{align*}%\label{1.8}
L_1(17)=&2\left\{\cos(3\pi/17)-\cos(2\pi/17)-\cos(4\pi/17)+\cos(\pi/17)\right.\notag\\
&+\left.\cos(5\pi/17)-\cos(6\pi/17)+\cos(7\pi/17)-\cos(8\pi/17)\right\}\notag\\
=&-2\sum_{n=1}^8(-1)^n\cos(n\pi/17)\\
=&-\Re\left\{\sum_{n=1}^{16}(-1)^ne^{\pi in/17}\right\}\\
=&\df{e^{\pi i/17}(1-e^{16\pi i/17})}{1+e^{\pi i/17}}=\df{e^{\pi i/17}+1}{1+e^{\pi i/17}}=1,
\end{align*}
which completes the proof of Theorem \ref{thm1.8}.
\end{proof}

\begin{theorem}\label{thm1.9} We have
\begin{align}\label{1.9}
L_2(17):=&\df{\sin(6\pi/17)\sin(7\pi/17)}{\sin(3\pi/17)\sin(5\pi/17)}+\df{\sin(4\pi/17)\sin(\pi/17)}{\sin(2\pi/17)\sin(8\pi/17)}\notag\\
-&\df{\sin(8\pi/17)\sin(2\pi/17)}{\sin(4\pi/17)\sin(\pi/17)}-\df{\sin(5\pi/17)\sin(3\pi/17)}{\sin(6\pi/17)\sin(7\pi/17)}=-1.
\end{align}
\end{theorem}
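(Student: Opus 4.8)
The plan is to mimic the proof of Theorem \ref{thm1.8}: reduce $L_2(17)$ to a short linear combination of the cosines $\cos(k\pi/17)$ and then read the answer off from the alternating cosine sum already evaluated there. Write $s_k:=\sin(k\pi/17)$ and $c_k:=\cos(k\pi/17)$. The key device is that the reflection $s_k=s_{17-k}$ turns each of the four quotients into a double angle. Indeed $s_6/s_3=2c_3$ and $s_7/s_5=s_{10}/s_5=2c_5$, so the first quotient equals $4c_3c_5$; the third equals $(s_8/s_4)(s_2/s_1)=4c_1c_4$; and the second and fourth quotients are exactly the reciprocals of the third and the first, respectively. Hence
\begin{equation*}
L_2(17)=4c_3c_5-4c_1c_4+\frac{1}{4c_1c_4}-\frac{1}{4c_3c_5}.
\end{equation*}

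Next I would treat the two products by the product-to-sum formula, obtaining $4c_3c_5=2(c_2+c_8)$ and $4c_1c_4=2(c_3+c_5)$. The step I expect to be the main obstacle is the two reciprocals, since $1/c_k$ is not a priori a short sum of cosines. The claim is that they rationalize to
\begin{equation*}
\frac{1}{4c_1c_4}=2(c_6-c_7),\qquad\frac{1}{4c_3c_5}=2(c_1-c_4).
\end{equation*}
Each is equivalent, after clearing denominators, to an identity of the shape $8c_1c_4(c_6-c_7)=1$. Expanding the left-hand side by repeated product-to-sum and reducing indices via $c_{17-k}=-c_k$ collapses it to $2\sum_{k=1}^{8}(-1)^{k-1}c_k$, which equals $1$ precisely because of the evaluation carried out in the proof of Theorem \ref{thm1.8}. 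Conceptually, $4c_3c_5$, $4c_1c_4$, and the two reciprocals are, up to sign, the four quartic Gaussian periods of the seventeenth roots of unity, and these rationalizations are the period products $\theta_0\theta_2=\theta_1\theta_3=-1$; this is the (very mild) cyclotomic fact hiding in the identity, and it ultimately rests only on $\sum_{j=1}^{8}\cos(2\pi j/17)=-\tfrac12$.

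Finally, substituting the four evaluations and collecting terms yields
\begin{equation*}
L_2(17)=2\left(-c_1+c_2-c_3+c_4-c_5+c_6-c_7+c_8\right)=2\sum_{k=1}^{8}(-1)^{k}c_k.
\end{equation*}
Since the proof of Theorem \ref{thm1.8} establishes $-2\sum_{k=1}^{8}(-1)^{k}c_k=L_1(17)=1$, we obtain $L_2(17)=-L_1(17)=-1$, which is the assertion. In short, the only genuinely nontrivial input is the pair of rationalization identities; everything else is the same double-angle and product-to-sum bookkeeping used throughout this section.
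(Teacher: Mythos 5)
Your argument is correct, and it funnels into exactly the same endpoint as the paper's proof, namely the alternating sum $2\sum_{n=1}^{8}(-1)^n\cos(n\pi/17)=-1$ established in the proof of Theorem \ref{thm1.8}; but you take a detour that the paper avoids. The paper applies the reflection $\sin(k\pi/17)=\sin((17-k)\pi/17)$ together with the double-angle formula to \emph{all four} quotients, obtaining $L_2(17)=4c_3c_5+4c_2c_8-4c_1c_4-4c_6c_7$ directly (in your notation $c_k=\cos(k\pi/17)$, $s_k=\sin(k\pi/17)$; e.g.\ $s_1/s_8=s_{16}/s_8=2c_8$), after which a single round of product-to-sum formulas finishes the proof. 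You apply that trick only to the first and third quotients, observe that the other two are their reciprocals, and are then forced to prove the rationalizations $1/(4c_1c_4)=2(c_6-c_7)$ and $1/(4c_3c_5)=2(c_1-c_4)$. These are true (they amount to $16c_1c_2c_4c_8=16c_3c_5c_6c_7=1$, i.e.\ your period products), and your reduction of them to identities of the shape $8c_1c_4(c_6-c_7)=2\sum_{k=1}^{8}(-1)^{k-1}c_k=1$ checks out, so the proof is complete. But the ``main obstacle'' you identify is self-inflicted: the same reflection that gives $s_7/s_5=2c_5$ also gives $s_3/s_7=s_{14}/s_7=2c_7$, so the second and fourth quotients are $4c_2c_8$ and $4c_6c_7$ with no rationalization needed. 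What your route buys is the pleasant cyclotomic interpretation of the four quotients as signed quartic Gaussian periods of the seventeenth roots of unity; what it costs is an extra layer of product-to-sum bookkeeping that the paper's symmetric treatment of all four quotients renders unnecessary.
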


\begin{proof} Utilizing first the `double angle' formula for $\sin$, and next the `addition' formula for $\cos$, we find that
\begin{align*}%\label{1.91}
%L_2(17)=&\df{\sin(6\pi/17)\sin(10\pi/17)}{\sin(3\pi/17)\sin(5\pi/17)}+\df{\sin(4\pi/17)\sin(16\pi/17)}{\sin(2\pi/17)\sin(8\pi/17)}\\
%&-\df{\sin(8\pi/17)\sin(2\pi/17)}{\sin(4\pi/17)\sin(\pi/17)}-\df{\sin(12\pi/17)\sin(14\pi/17)}{\sin(6\pi/17)\sin(7\pi/17)}\\
L_2(17)=&4\cos(3\pi/17)\cos(5\pi/17)+4\cos(2\pi/17)\cos(8\pi/17)\\
&-4\cos(4\pi/17)\cos(\pi/17)-4\cos(6\pi/17)\cos(7\pi/17)\\
=&2\left\{\cos(8\pi/17)+\cos(2\pi/17)+\cos(10\pi/17)+\cos(6\pi/17)\right.\\
&-\left.\cos(5\pi/17)-\cos(3\pi/17)-\cos(13\pi/17)-\cos(\pi/17)\right\}\\
=&2\sum_{n=1}^8(-1)^n\cos(n\pi/17)=-1,
%=&\left\{\sum_{n=1}^{16}(-1)^ne^{\pi in/17}\right\}\\
%=&-\df{e^{\pi i/17}(1-e^{16\pi i/17})}{1+e^{\pi i/17}}=\df{-e^{\pi i/17}-1}{1+e^{\pi i/17}}=-1,
\end{align*}
by the same argument that was used in the proof of Theorem \ref{thm1.8}.
This completes the proof of Theorem \ref{thm1.9}.
\end{proof}

\begin{theorem}\label{thm1.3}
We have
\begin{align}\label{1.3}
L_1(13):=&\df{\sin(4\pi/13)}{\sin(2\pi/13)}-\df{\sin(6\pi/13)}{\sin(3\pi/13)}-\df{\sin(2\pi/13)}{\sin(\pi/13)}\notag\\
&+\df{\sin(5\pi/13)}{\sin(4\pi/13)}-\df{\sin(3\pi/13)}{\sin(5\pi/13)}+\df{\sin(\pi/13)}{\sin(6\pi/13)}=-1.
\end{align}
\end{theorem}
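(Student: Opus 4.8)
The plan is to reduce $L_1(13)$ to a single alternating cosine sum of exactly the type already handled in Theorem~\ref{thm1.8}, and then to evaluate that sum by summing a geometric series of roots of unity. The first three quotients in \eqref{1.3} are already in ``double-angle'' form, since $\sin(4\pi/13)/\sin(2\pi/13)=2\cos(2\pi/13)$, and likewise for the second and third terms. The remaining three quotients are \emph{not} literally double angles, and the key observation is that the reflection identity $\sin((13-k)\pi/13)=\sin(k\pi/13)$ repairs this: rewriting $\sin(5\pi/13)=\sin(8\pi/13)$, $\sin(3\pi/13)=\sin(10\pi/13)$, and $\sin(\pi/13)=\sin(12\pi/13)$ turns each numerator into the sine of twice the denominator's argument.

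After these substitutions I would apply $\sin(2\theta)/\sin\theta=2\cos\theta$ to all six terms. Tracking the signs in \eqref{1.3}, the six contributions collapse to
\begin{equation*}
L_1(13)=2\sum_{k=1}^{6}(-1)^k\cos\left(\frac{k\pi}{13}\right).
\end{equation*}
One should verify that the signs align with $(-1)^k$; this is immediate once the numerators have been rewritten via the reflection step.

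Next I would mirror the argument used for Theorem~\ref{thm1.8}. Using $\cos((13-k)\pi/13)=-\cos(k\pi/13)$ together with the parity of $13-k$, each summand with index $k\in\{7,\dots,12\}$ equals the summand with index $13-k\in\{1,\dots,6\}$, so
\begin{equation*}
L_1(13)=\sum_{k=1}^{12}(-1)^k\cos\left(\frac{k\pi}{13}\right)=\Re\left\{\sum_{k=1}^{12}(-1)^k e^{k\pi i/13}\right\}.
\end{equation*}
Setting $w=-e^{\pi i/13}=e^{14\pi i/13}$, one has $w^{13}=e^{14\pi i}=1$ with $w\neq 1$, so $w$ is a primitive $13$th root of unity. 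Hence $\sum_{k=1}^{12}w^k=-1$ is the sum of all nontrivial $13$th roots of unity, and taking real parts yields $L_1(13)=-1$.

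The only genuinely delicate point is the opening reflection step: recognizing that the three stubborn quotients become double angles after applying $\sin(\pi-x)=\sin x$. Once all six terms reduce to $2\cos(k\pi/13)$, the rest is the same root-of-unity computation already carried out in Theorems~\ref{thm1.8} and~\ref{thm1.9}, so I do not anticipate any serious obstacle beyond that initial recognition. In particular, unlike the more recondite degree-$13$ identity flagged in the introduction, this one should succumb to elementary trigonometry without recourse to Gauss's cyclotomy.
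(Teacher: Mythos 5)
Your proposal is correct and follows essentially the same route as the paper: double-angle reduction (with the reflection $\sin((13-k)\pi/13)=\sin(k\pi/13)$ for the three non-obvious quotients) to get $2\sum_{k=1}^{6}(-1)^k\cos(k\pi/13)$, extension to a full sum over $k=1,\dots,12$, and evaluation via the geometric series of $-e^{\pi i/13}$. The only cosmetic difference is that you invoke the vanishing sum of all $13$th roots of unity where the paper sums the geometric series explicitly; both give $-1$ at once.
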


\begin{proof} By the `double angle' formula, we write the sum in \eqref{1.3} as
\begin{align}\label{a}
L_1(13)=&2\left\{\cos(2\pi/13)-\cos(3\pi/13)-\cos(\pi/13)\right.\notag\\
&\left.+\cos(4\pi/13)-\cos(5\pi/13)+\cos(6\pi/13)\right\}\notag\\
=&2\sum_{n=1}^6(-1)^n\cos(n\pi/13)\notag\\
%=&\sum_{n=1}^{12}(-1)^n\cos(n\pi/13)\notag\\
=&\Re\left\{\sum_{n=1}^{12}(-1)^ne^{\pi in/13}\right\}\notag\\
=&\df{-e^{\pi i/13}(1-e^{12\pi i/13})}{1+e^{\pi i/13}}=-1,
\end{align}
which finishes the proof of \eqref{1.3}.
\end{proof}

\begin{theorem}\label{thm1.4}  We have
\begin{equation}\label{1.4}
L_2(13):=\df{\sin(4\pi/13)\sin(6\pi/13)}{\sin(2\pi/13)\sin(3\pi/13)}-\df{\sin(2\pi/13)\sin(3\pi/13)}{\sin(\pi/13)\sin(5\pi/13)}
-\df{\sin(5\pi/13)\sin(\pi/13)}{\sin(4\pi/13)\sin(6\pi/13)}=1.
\end{equation}
\end{theorem}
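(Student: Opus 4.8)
The plan is to reduce $L_2(13)$ to the same alternating sum of cosines that appears in the proof of Theorem \ref{thm1.3}, and then quote that evaluation directly. The first and decisive move is to clear each quotient using the `double angle' formula $\sin 2\theta=2\sin\theta\cos\theta$ together with the reflection $\sin(k\pi/13)=\sin((13-k)\pi/13)$. The key structural observation is that doubling modulo $13$, after folding $7,\dots,12$ back to $1,\dots,6$ by reflection, acts as a single $6$-cycle on $\{1,2,3,4,5,6\}$, namely $1\mapsto 2\mapsto 4\mapsto 5\mapsto 3\mapsto 6\mapsto 1$. Consequently each sine in a numerator is the double-angle image of a sine in the corresponding denominator, so every quotient collapses to a product of two cosines.

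Carrying this out term by term, I would write $\sin(4\pi/13)=2\sin(2\pi/13)\cos(2\pi/13)$ and $\sin(6\pi/13)=2\sin(3\pi/13)\cos(3\pi/13)$ in the first quotient; $\sin(2\pi/13)=2\sin(\pi/13)\cos(\pi/13)$ together with $\sin(3\pi/13)=\sin(10\pi/13)=2\sin(5\pi/13)\cos(5\pi/13)$ in the second; and $\sin(5\pi/13)=\sin(8\pi/13)=2\sin(4\pi/13)\cos(4\pi/13)$ together with $\sin(\pi/13)=\sin(12\pi/13)=2\sin(6\pi/13)\cos(6\pi/13)$ in the third. After cancellation this yields
\begin{equation*}
L_2(13)=4\cos(2\pi/13)\cos(3\pi/13)-4\cos(\pi/13)\cos(5\pi/13)-4\cos(4\pi/13)\cos(6\pi/13).
\end{equation*}

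Next I would apply the product-to-sum identity $2\cos A\cos B=\cos(A-B)+\cos(A+B)$ to each of the three products, using $\cos(-x)=\cos x$ and the single further reduction $\cos(10\pi/13)=-\cos(3\pi/13)$. Collecting the six resulting cosines, I expect everything to line up as
\begin{equation*}
L_2(13)=2\sum_{n=1}^{6}(-1)^{n+1}\cos(n\pi/13)=-2\sum_{n=1}^{6}(-1)^{n}\cos(n\pi/13).
\end{equation*}
The proof of Theorem \ref{thm1.3} (see \eqref{a}) already establishes $2\sum_{n=1}^{6}(-1)^{n}\cos(n\pi/13)=-1$, whence $L_2(13)=1$.

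The only genuinely nonroutine point is the first step: recognizing which reflection $\sin(k\pi/13)=\sin((13-k)\pi/13)$ to insert so that every numerator factor becomes the double angle of a denominator factor. Once the $6$-cycle structure of the doubling map is seen, the reductions are forced, and the remainder is the same bookkeeping as in Theorems \ref{thm1.8}--\ref{thm1.3}.
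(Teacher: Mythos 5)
Your proposal is correct and follows essentially the same route as the paper's proof: double-angle reductions (with the reflections $\sin(3\pi/13)=\sin(10\pi/13)$, $\sin(5\pi/13)=\sin(8\pi/13)$, $\sin(\pi/13)=\sin(12\pi/13)$) give $4\cos(2\pi/13)\cos(3\pi/13)-4\cos(\pi/13)\cos(5\pi/13)-4\cos(4\pi/13)\cos(6\pi/13)$, and the product-to-sum step plus $\cos(10\pi/13)=-\cos(3\pi/13)$ reduces everything to $-2\sum_{n=1}^{6}(-1)^n\cos(n\pi/13)=1$, exactly as in the paper via \eqref{a}. No gaps.
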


\begin{proof}   Using the `double angle' formula followed by the addition formula for $\cos$, we find that
\begin{align*}
&L_2(13)=4\{\cos(2\pi/13)\cos(3\pi/13)-\cos(\pi/13)\cos(5\pi/13)-\cos(4\pi/13)\cos(6\pi/13)\}\\
=&2\{\cos(5\pi/13)+\cos(\pi/13)-\cos(4\pi/13)-\cos(6\pi/13)-\cos(10\pi/13)-\cos(2\pi/13)\}\\
=&2\{\cos(5\pi/13)+\cos(\pi/13)-\cos(4\pi/13)-\cos(6\pi/13)+\cos(3\pi/13)-\cos(2\pi/13)\}\\
=&-2\sum_{n=1}^6(-1)^n\cos(n\pi/13)
=1,
\end{align*}
by \eqref{a}.
\end{proof}

\begin{theorem}\label{thm1.5} We have
\begin{align}\label{1.5}
L_3(13):=\df{\sin(2\pi/13)\sin(3\pi/13)}{\sin(4\pi/13)\sin(6\pi/13)}-\df{\sin(\pi/13)\sin(5\pi/13)}{\sin(2\pi/13)\sin(3\pi/13)}-
\df{\sin(4\pi/13)\sin(6\pi/13)}{\sin(5\pi/13)\sin(\pi/13)}=-4.
\end{align}
\end{theorem}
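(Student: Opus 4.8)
The plan is to strip a factor of $2\cos$ from each quotient, convert the resulting products of cosines into the cubic Gaussian periods of $\mathbb{Q}(\zeta_{13})$, and then read off the value from the classical cubic period polynomial. Throughout I write $\theta=\pi/13$, $s_k=\sin(k\theta)$, $c_k=\cos(k\theta)$, and $d_k=2\cos(k\theta)$, so that $d_{13-k}=-d_k$ and $d_{-k}=d_k$. First I would apply the double-angle formula $s_{2k}=2s_kc_k$ to each of the three terms. Since $s_4=2s_2c_2$ and $s_6=2s_3c_3$, the first quotient collapses to $1/(d_2d_3)$; writing $s_2=2s_1c_1$ and $s_3=s_{10}=2s_5c_5$ turns the second into $1/(d_1d_5)$; and writing $s_5=s_8=2s_4c_4$, $s_1=s_{12}=2s_6c_6$ turns the third into $1/(d_4d_6)$. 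Hence
\begin{equation*}
L_3(13)=\frac{1}{d_2d_3}-\frac{1}{d_1d_5}-\frac{1}{d_4d_6}.
\end{equation*}

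Next I would apply the product-to-sum rule $d_ad_b=d_{a+b}+d_{a-b}$ to each denominator, obtaining $d_2d_3=d_1+d_5$, $d_1d_5=d_4+d_6$, and $d_4d_6=d_2+d_{10}=d_2-d_3$. The point of this step is that, although the $d_k$ with $k$ odd live only in $\mathbb{Q}(\zeta_{26})$, these three particular combinations lie in $\mathbb{Q}(\zeta_{13})$ and are, up to sign, exactly the cubic Gaussian periods. Concretely, with $\zeta=e^{2\pi i/13}$ and the index-$3$ subgroup $H=\{1,5,8,12\}$ of $(\mathbb{Z}/13)^{\times}$, the periods $\eta_0=\sum_{a\in H}\zeta^a$, $\eta_1=\sum_{a\in 2H}\zeta^a$, $\eta_2=\sum_{a\in 4H}\zeta^a$ satisfy $\eta_0=d_2-d_3$, $\eta_1=d_4+d_6$, and $\eta_2=-(d_1+d_5)$. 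Substituting,
\begin{equation*}
L_3(13)=-\frac{1}{\eta_2}-\frac{1}{\eta_1}-\frac{1}{\eta_0}=-\left(\frac{1}{\eta_0}+\frac{1}{\eta_1}+\frac{1}{\eta_2}\right).
\end{equation*}

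Finally I would invoke Gauss's cyclotomy: the cubic period polynomial for $p=13$ is $x^3+x^2-4x+1$, which follows from the representation $4\cdot 13=(-5)^2+27\cdot1^2$, with $-5\equiv 1\pmod{3}$, in Gauss's formula for the reduced period polynomial. By Vieta's relations one then has $\eta_0\eta_1+\eta_1\eta_2+\eta_2\eta_0=-4$ and $\eta_0\eta_1\eta_2=-1$, so that $\frac{1}{\eta_0}+\frac{1}{\eta_1}+\frac{1}{\eta_2}=\frac{\eta_0\eta_1+\eta_1\eta_2+\eta_2\eta_0}{\eta_0\eta_1\eta_2}=\frac{-4}{-1}=4$, and therefore $L_3(13)=-4$.

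The step I expect to be the main obstacle, and the reason this identity is subtler than its companions, is the sign bookkeeping that makes the periods appear correctly: the denominators are products $d_ad_b$ rather than periods, and only after the product-to-sum step do the periods emerge, one of them, $\eta_2=-(d_1+d_5)$, carrying a minus sign. It is precisely this sign that converts the fully symmetric quantity $\sum_i\eta_i^{-1}$ into the signed combination defining $L_3(13)$, and getting it wrong would produce a spurious answer. The genuinely cyclotomic input is the period polynomial itself, equivalently the middle coefficient $\eta_0\eta_1+\eta_1\eta_2+\eta_2\eta_0=-4$, whose evaluation rests on the cyclotomic numbers of order $3$ modulo $13$; everything else is elementary trigonometry together with Vieta's formulas.
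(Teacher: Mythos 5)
Your proof is correct, and after the common opening move it takes a genuinely different route from the paper. Both arguments begin identically, using the double-angle formula (together with $\sin(k\pi/13)=\sin((13-k)\pi/13)$) to reduce each quotient to a reciprocal $1/(4\cos(a\pi/13)\cos(b\pi/13))$. From there the paper stays entirely elementary: it clears denominators using the product formula $\prod_{n=1}^{6}\cos(n\pi/13)=2^{-6}$, expands the resulting numerator (three products of four cosines) by repeated product-to-sum, and recognizes the outcome as $2^{3}\sum_{n=1}^{6}(-1)^{n}\cos(n\pi/13)=-4$, reusing the alternating cosine sum already evaluated in the proof of Theorem \ref{thm1.3}. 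You instead apply product-to-sum to each two-cosine denominator separately, identify the three resulting quantities (with the one sign flip at $\eta_2=-(d_1+d_5)$, which you correctly flag as the delicate point) with the cubic Gaussian periods of $\mathbb{Q}(e^{2\pi i/13})$, and finish with Vieta's formulas applied to the period polynomial $x^3+x^2-4x+1$. Your identifications check out: with $H=\{1,5,8,12\}$ one indeed gets $\eta_0=d_2-d_3=d_4d_6$, $\eta_1=d_4+d_6=d_1d_5$, $\eta_2=-(d_1+d_5)=-d_2d_3$, and the period polynomial is as you state, so $L_3(13)=-(\eta_0\eta_1+\eta_1\eta_2+\eta_2\eta_0)/(\eta_0\eta_1\eta_2)=-4$. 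The paper's computation is longer but self-contained, needing only the sum from \eqref{a}; yours is shorter and more conceptual, exposing $-4$ as (essentially) the middle coefficient of the cubic period polynomial --- an amusing twist given that the paper reserves Gauss's cyclotomy for Theorem \ref{thm1.6} and treats the present identity by purely trigonometric means. If you want your version to be fully self-contained, you should either cite Gauss's formula for the reduced cubic period polynomial via $4p=L^2+27M^2$ (as you do) or compute $\sum_{i<j}\eta_i\eta_j$ and $\eta_0\eta_1\eta_2$ directly from the cyclotomic numbers of order $3$; either way the argument is complete.
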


\begin{proof}
By  the `double angle' formula for $\sin$ and a formula for the product of the six $\cos$'s \cite[p.~41, Formula 1.392, no.~1]{gr}, we find that
\begin{align}\label{1.5a}
L_3(13)=&\df{1}{4\cos(2\pi/13)\cos(3\pi/13)}-\df{1}{4\cos(\pi/13)\cos(5\pi/13)}-\df{1}{4\cos(4\pi/13)\cos(6\pi/13)}\notag\\
=&\df14\left\{{\prod_{n=1}^6\cos(n\pi/13)}\right\}^{-1}\bigg(\cos(\pi/13)\cos(5\pi/13)\cos(4\pi/13)\cos(6\pi/13)\notag\\
&-\cos(2\pi/13)\cos(3\pi/13)\cos(4\pi/13)\cos(6\pi/13)\notag\\
&-\cos(2\pi/13)\cos(3\pi/13)\cos(\pi/13)\cos(5\pi/13)\bigg)\notag\\
=&2^4\bigg(\cos(\pi/13)\cos(5\pi/13)\cos(4\pi/13)\cos(6\pi/13)\notag\\&-\cos(2\pi/13)\cos(3\pi/13)\cos(4\pi/13)\cos(6\pi/13)\notag\\
&-\cos(2\pi/13)\cos(3\pi/13)\cos(\pi/13)\cos(5\pi/13)\bigg).
\end{align}
Repeated use of the elementary formula for the product of two $\cos$'s gives
%\newpage
\begin{align*}
&\cos a\cos b \cos c \cos d\\ =&\df18\left\{\cos(a+b+c+d)+\cos(a+b+c-d)+\cos(a+b-c+d)+\cos(a+b-c-d)\right.\\
&\left.+\cos(a-b+c+d)+\cos(a-b+c-d)+\cos(a-b-c+d)+\cos(a-b-c-d)\right\}.
\end{align*}
Hence, with $x=\pi/13$,
\begin{align*}
\cos x\cos 5x \cos 4x\cos 6x =& \df18\left\{-\cos x+ \cos 2x -\cos 3x +2\cos 4x-\cos 5x +2\cos6x\right\},\\
\cos 2x\cos 3x \cos 4x\cos 6x =& \df18\left\{\cos x-2 \cos 2x +2\cos 3x -\cos 4x+\cos 5x -\cos6x\right\},\\
\cos 2x\cos 3x \cos x\cos 5x =& \df18\left\{2\cos x- \cos 2x +\cos 3x -\cos 4x+2\cos 5x -\cos6x\right\}.\\
 \end{align*}
 Using these identities in \eqref{1.5a},  we conclude that
 \begin{align*}
  L_2(13)=&2^3\left\{-\cos(\pi/13) +\cos (2\pi/13)-\cos (3\pi/13)\right.\\&\left.+\cos (4\pi/13)-\cos (5\pi/13)+\cos (6\pi/13)\right\}\\
  =&2^3 \sum_{n=1}^6(-1)^n\cos(n\pi/13)
  =-4,
  \end{align*}
  by \eqref{a}, and so the proof is complete.
\end{proof}

\begin{theorem}\label{thm1.6} We have
\begin{equation}\label{1.6}
\df{\sin(6\pi/13)\sin(2\pi/13)\sin(5\pi/13)}{\sin(4\pi/13)\sin(3\pi/13)\sin(\pi/13)}-
\df{\sin(4\pi/13)\sin(3\pi/13)\sin(\pi/13)}{\sin(6\pi/13)\sin(2\pi/13)\sin(5\pi/13)}=3.
\end{equation}
\end{theorem}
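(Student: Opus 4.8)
The plan is to recognize the two triple products as the quadratic–residue and non–residue products of the $13$th roots of unity, turning \eqref{1.6} into a statement about Gaussian periods. Write
\begin{equation*}
A=\sin(6\pi/13)\sin(2\pi/13)\sin(5\pi/13),\qquad B=\sin(4\pi/13)\sin(3\pi/13)\sin(\pi/13),
\end{equation*}
so that the left side of \eqref{1.6} is $A/B-B/A=(A^2-B^2)/(AB)$. Put $\zeta=e^{2\pi i/13}$ and recall $2\sin(k\pi/13)=|1-\zeta^k|$. The residues occurring in $A$, namely $\{2,5,6\}$, represent the non–residues mod $13$, while those in $B$, namely $\{1,3,4\}$, represent the quadratic residues; each class is closed under $k\mapsto 13-k$. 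Hence, grouping each factor with its conjugate,
\begin{equation*}
64A^2=\prod_{n}(1-\zeta^{n})=:\beta,\qquad 64B^2=\prod_{r}(1-\zeta^{r})=:\alpha,
\end{equation*}
where $n$ runs through the non–residues and $r$ through the residues, and both $\alpha,\beta$ are positive reals. Thus the left side of \eqref{1.6} equals $(\beta-\alpha)/\sqrt{\alpha\beta}$, and it suffices to find $\alpha\beta$ and $\alpha+\beta$. The product is immediate: $\alpha\beta=\prod_{k=1}^{12}(1-\zeta^k)$ is the value at $x=1$ of $(x^{13}-1)/(x-1)$, namely $13$.

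The substantive step is $\alpha+\beta=13$, and this is where Gauss's cyclotomy enters. Pairing conjugate factors and writing $c_k=2\cos(2\pi k/13)$, I would record $\alpha=(2-c_1)(2-c_3)(2-c_4)$ and $\beta=(2-c_2)(2-c_5)(2-c_6)$. Expanding, $\alpha+\beta$ becomes an integer combination of the \emph{within–class} elementary symmetric functions of the $c_k$. Using the product–to–sum law $c_ic_j=c_{i+j}+c_{|i-j|}$ (indices mod $13$, with $c_0=2$ and $c_{13-k}=c_k$) together with $\sum_{k=1}^{6}c_k=-1$, each of these collapses: one finds $c_1c_3+c_1c_4+c_3c_4=-1$ and $c_2c_5+c_2c_6+c_5c_6=-1$, and, crucially, $c_1c_3c_4+c_2c_5c_6=3$ (each triple product reduces to $2$ plus a period, and the two periods together sum to $\sum_{k=1}^6 c_k=-1$). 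Collecting the constant term $16$, the contribution $+4$ from $-4\sum c_k$, the contribution $-4$ from the quadratic terms, and $-3$ from the cubic terms gives $\alpha+\beta=16+4-4-3=13$.

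It then follows that $\alpha$ and $\beta$ are the roots of $t^2-13t+13=0$, so $\beta-\alpha=\sqrt{13^2-4\cdot13}=3\sqrt{13}$, where $\beta>\alpha$ because the non–residue product is the larger. Therefore the left side of \eqref{1.6} equals $(\beta-\alpha)/\sqrt{\alpha\beta}=3\sqrt{13}/\sqrt{13}=3$, as claimed.

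I expect the main obstacle to be the evaluation $\alpha+\beta=13$, and within it the within–class triple–product identity $c_1c_3c_4+c_2c_5c_6=3$. This is the genuinely cyclotomic input, reflecting the factorization of $\Phi_{13}$ over the quadratic subfield $\mathbb{Q}(\sqrt{13})$ into the two conjugate period polynomials; everything else reduces to bookkeeping with product–to–sum formulas and the elementary product $\prod_{k=1}^{12}(1-\zeta^k)=13$.
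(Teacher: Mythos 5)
Your proposal is correct, and it reaches the result by a genuinely different (and more self-contained) route than the paper, though both hinge on the same cyclotomic structure. The paper sets $P=A/B$, observes as you do that $Q=P^2$ is the quotient of the non-residue sine product by the residue sine product, and then quotes Gauss's factorization $\prod_r(x-e^{2\pi ir/13})=\tfrac12\{Y(x)-\sqrt{13}Z(x)\}$, $\prod_n(x-e^{2\pi in/13})=\tfrac12\{Y(x)+\sqrt{13}Z(x)\}$ from Davenport, computing $Y(x)$ and $Z(x)$ by \emph{Mathematica} to get $Y(1)=13$, $Z(1)=3$, whence $Q=(13+3\sqrt{13})/(13-3\sqrt{13})$ and $P=(3+\sqrt{13})/2$. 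Your argument replaces the black-box computation of $Y(x),Z(x)$ by a hand calculation: in the paper's notation your $\alpha=\tfrac12(Y(1)-\sqrt{13}Z(1))$ and $\beta=\tfrac12(Y(1)+\sqrt{13}Z(1))$, so your identities $\alpha\beta=\Phi_{13}(1)=13$ and $\alpha+\beta=13$ are exactly the statements $Y(1)^2-13Z(1)^2=4\cdot 13$ and $Y(1)=13$, derived directly from the product-to-sum law $c_ic_j=c_{i+j}+c_{|i-j|}$ and $\sum_{k=1}^{6}c_k=-1$; I checked the three symmetric-function evaluations ($\sigma_2=-1$ in each class, $c_1c_3c_4+c_2c_5c_6=3$) and the bookkeeping $16+4-4-3=13$, and all are correct. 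What your version buys is a proof with no computer algebra and no external citation; what the paper's version buys is a template that generalizes immediately to other primes via the tabulated $Y,Z$. The one point you should make explicit is the sign $\beta>\alpha$ needed to choose $\beta-\alpha=+3\sqrt{13}$: this is immediate, since all six angles lie in $(0,\pi/2)$ where $\sin$ is positive and increasing, and pairing the factors as $6>4$, $5>3$, $2>1$ gives $A>B>0$, hence $\beta=64A^2>64B^2=\alpha$. (The paper faces the analogous issue in taking $P=+\sqrt{Q}$ and likewise leaves it implicit.)
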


\begin{proof}  Let
\begin{equation}\label{1.6a}
P:=\df{\sin(6\pi/13)\sin(2\pi/13)\sin(5\pi/13)}{\sin(4\pi/13)\sin(3\pi/13)\sin(\pi/13)}.
\end{equation}
Thus, by \eqref{1.6}, we want to prove that
\begin{equation}\label{1.6b}
P-\df{1}{P}=3.
\end{equation}

Examining the denominator  of $P$, we note that $1,3,4$ are quadratic residues modulo $13$, and examining the numerator of $P$, we note that $2,5,6$ are quadratic non-residues modulo $13$. Since $13\equiv 1\pmod4$, $12, 10$, and $9 $ are quadratic residues modulo $13$.  Similarly, $7,8$, and $11$ are quadratic non-residues modulo $13$.  In conclusion, we see that
\begin{equation}\label{1.6d}
Q:=P^2=\df{\prod_{n}\sin(n\pi/13)}{\prod_{r}\sin(r\pi/13)},
\end{equation}
where in the product in the numerator, $n$ runs through a complete set of quadratic non-residues modulo 13, and where in the product in the denominator $r$ runs through a complete set of quadratic residues modulo 13.

From Davenport's book \cite[p.~10]{davenport}, by a result of Gauss in cyclotomy, there exist polynomials $Y(x)$ and $Z(x)$ with integral coefficients such that
\begin{align*}
\prod_r(x-e^{2\pi ir/13})=&\frac12\left\{Y(x)-\sqrt{13}Z(x)\right\},\\
\prod_n(x-e^{2\pi in/13})=&\frac12\left\{Y(x)+\sqrt{13}Z(x)\right\},
\end{align*}
where $r$ and $n$ have the same meanings as in \eqref{1.6d}. Furthermore, if $Y:=Y(1)$ and $Z=Z(1)$, then \cite[p.~11]{davenport}
\begin{equation}\label{1.6e}
Q=\df{Y+\sqrt{13}\,Z}{Y-\sqrt{13}\,Z}.
\end{equation}
There remains the calculation of $Y$ and $Z$.

We are grateful to Di Liu, who used \emph{Mathematica} to determine $Y(x)$ and $Z(x)$ for us, to wit,
\begin{align*}
Y(x)&=2x^6+x^5+4x^4-x^3+4x^2+x+2,\\
Z(x)&=x^5+x^3+x.
\end{align*}
So, $Y(1)=13$ and $Z(1)=3$. Thus, by \eqref{1.6e},
\begin{equation}\label{1.6g}
Q=\df{13+3\sqrt{13}}{13-3\sqrt{13}}.
\end{equation}
By \eqref{1.6d} and \eqref{1.6g}, write
\begin{equation*}%\label{1.6f}
P=\sqrt{\df{13+3\sqrt{13}}{13-3\sqrt{13}}}=A+B\sqrt{13}.
\end{equation*}
An elementary calculation shows that
\begin{equation*}
P=\df{3+\sqrt{13}}{2},
\end{equation*}
which is the fundamental unit for $\mathbb{Q}(\sqrt{13}).$   Hence, \eqref{1.6b} follows, and consequently the proof of \eqref{1.6} is complete.

\end{proof}

\end{document}